\def\biblio{\bibliography{duality}\bibliographystyle{alpha}}
\definecolor{dark-red}{rgb}{0.5,0.15,0.15}
\definecolor{dark-blue}{rgb}{0.15,0.15,0.6}
\definecolor{dark-green}{rgb}{0.15,0.6,0.15}
\renewcommand*{\backref}[1]{}
\renewcommand*{\backrefalt}[4]{%
  \ifcase #1 %
No citations.
  \or
(cit. on p. #2).%
  \else
(cit on pp. #2).%
  \fi%
}
\newtheorem{thm}{Theorem}[section]
\newtheorem{cor}[thm]{Corollary}
\newtheorem{prop}[thm]{Proposition}
\newtheorem{lem}[thm]{Lemma}
\theoremstyle{definition}
\newtheorem{defn}[thm]{Definition}
\theoremstyle{remark}
\newtheorem{rem}[thm]{Remark}
\let\c@equation\c@thm
\numberwithin{equation}{section}
\Crefname{figure}{Figure}{Figures}
\Crefname{assu}{Assumption}{Assumptions}
\Crefname{lem}{Lemma}{Lemmas}
\Crefname{thm}{Theorem}{Theorems}
\Crefname{prop}{Proposition}{Propositions}
\DeclareMathOperator{\im}{Im}
\newcommand{\Q}{\mathbb{Q}}
\newcommand{\Z}{\mathbb{Z}}
\let\lim\relax
\DeclareMathOperator{\lim}{lim}
\newcommand{\F}{\mathbb{F}}
\newcommand{\K}{\mathbb{K}}
\DeclareMathOperator{\hofib}{hofib}
\newcommand{\Lf}[1]{\widehat{L}_{#1}^f}
\title{A Whitehead theorem for periodic homotopy groups}
\author{Tobias Barthel}
\thanks{TB was supported by the DNRF92 and the European Unions Horizon 2020 research and innovation programme under the Marie Sklodowska-Curie grant agreement No.~751794.}
\address{Department of Mathematical Sciences, University of Copenhagen, Universitetsparken 5, 2100 K{\o}benhavn {\O}, Denmark}
\email{tbarthel@math.ku.dk}
\author{Gijs Heuts}
\thanks{GH was supported by NWO grant 016.Veni.192.186.}
\address{Mathematical Institute, Utrecht University, Budapestlaan 6, 3584 CD  Utrecht, The Netherlands}
\email{g.s.k.s.heuts@uu.nl}
\author{Lennart Meier}
\address{Mathematical Institute, Utrecht University, Budapestlaan 6, 3584 CD  Utrecht, The Netherlands}
\email{f.l.m.meier@uu.nl}
\date{\today}
\begin{document}

\begin{abstract}
We show that $v_n$-periodic homotopy groups detect homotopy equivalences between simply-connected finite CW-complexes.
\end{abstract}

\maketitle

\setcounter{tocdepth}{1}
\tableofcontents
\def\biblio{}

\section{Introduction}

The classical Whitehead theorem \cite{whitehead} states that a map of CW-complexes which induces an isomorphism on homotopy groups (for any choice of basepoint) is a homotopy equivalence. Combined with the Hurewicz theorem, it implies that an integral homology equivalence of nilpotent CW-complexes is a homotopy equivalence. This paper concerns a variant of such results where one replaces homotopy groups and integral homology by algebraic invariants arising from chromatic homotopy theory. 

The first such invariants are the Morava $K$-theories $K(n)$. These also depend on a prime $p$, but (as is common) we suppress it from the notation. The following was proved, but not explicitly stated, by Bousfield \cite{bousfield_homology}; an alternative proof is given by Hopkins and Ravenel \cite{hopkinsravenel}, who show that suspension spectra are local with respect to the wedge of all Morava $K$-theories.

\begin{thm}[Bousfield, Hopkins--Ravenel]
Let $f$ be a map between nilpotent spaces such that $K(n)_*f$ is an isomorphism for all $n$ and $p$. Then $f$ is a weak homotopy equivalence.
\end{thm}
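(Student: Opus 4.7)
The plan is to reduce the theorem to the classical Whitehead theorem for nilpotent spaces by first showing that $\Sigma^\infty_+ f$ is a weak equivalence of spectra.

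First I would appeal to the theorem of Hopkins and Ravenel \cite{hopkinsravenel} that suspension spectra are harmonic: for any space $X$ and any prime $p$, the $p$-local spectrum $(\Sigma^\infty_+ X)_{(p)}$ is local with respect to $E_p := \bigvee_{n\geq 0} K(n)$, where by convention $K(0) = H\Q$. Taking the hypothesis on $f$ to include the rational case $K(0)$, it follows that $(E_p)_*(f)$ is an isomorphism for every prime $p$, and hence $(\Sigma^\infty_+ f)_{(p)}$ is an $E_p$-equivalence between $E_p$-local spectra, hence itself a weak equivalence. Assembling over all primes via the arithmetic fracture square (using that $\Sigma^\infty_+ f$ is rationally an equivalence as well, since $K(0)_* f$ is an isomorphism), one concludes that $\Sigma^\infty_+ f$ is a stable equivalence.

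From the stable equivalence of suspension spectra it follows that $f$ induces an isomorphism on integral homology, using $H_*(X;\Z) \cong \pi_*(H\Z \otimes \Sigma^\infty_+ X)$. Finally, the generalized Whitehead theorem for nilpotent spaces of Bousfield (see \cite{bousfield_homology}) asserts that an integral homology equivalence between nilpotent spaces is a weak homotopy equivalence, which finishes the argument.

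The hard part is the harmonicity of suspension spectra, which is a deep consequence of the nilpotence theorem of Devinatz--Hopkins--Smith. Once this input is granted, the remaining steps are formal applications of Bousfield localization together with the classical nilpotent Whitehead theorem. An alternative route, closer to Bousfield's original unpublished argument, would bypass spectra altogether and instead use the arithmetic fracture decomposition of a nilpotent space into its rationalization and its $p$-completions, treating each factor with the relevant Morava $K$-theories: rationally via $K(0)=H\Q$, and $p$-adically via the fact that $\F_p$-homology isomorphisms of nilpotent spaces coincide with $p$-completion equivalences, combined with the harmonicity of $p$-complete suspension spectra at each prime.
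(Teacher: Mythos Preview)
Your argument is correct and is exactly the route the paper indicates: the paper does not give a detailed proof of this background theorem but simply attributes it to Bousfield and to Hopkins--Ravenel, noting that the latter prove suspension spectra are local with respect to the wedge of all Morava $K$-theories. Your proposal spells out precisely this Hopkins--Ravenel deduction (harmonicity of $\Sigma^\infty_+ X$ $\Rightarrow$ $\Sigma^\infty_+ f$ is an equivalence $\Rightarrow$ $H\Z_*$-equivalence $\Rightarrow$ weak equivalence of nilpotent spaces), so there is nothing to add.
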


For convenience we assume a prime $p$ has been fixed and we work $p$-locally throughout. Our focus will be on the $v_n$-periodic homotopy groups of spaces. Recall that a pointed finite CW-complex $V$ is said to be \emph{of type $n$} if $K(i)_*V \cong K(i)_*$ for $i < n$, but $K(n)_*V$ is non-trivial. The periodicity theorem of Hopkins and Smith \cite{nilpotence2} implies that after suspending $V$ sufficiently many times, it admits a \emph{$v_n$ self-map} $v\colon \Sigma^d V \rightarrow V$. By definition, such a map induces an isomorphism on the groups $K(n)_*V$ and acts nilpotently on the (reduced) $K(m)$-homology groups for $m \neq n$. 

Now for $X$ a pointed space and $n \geq 1$, one defines its \emph{$v$-periodic homotopy groups} to be
\begin{equation*}
    v^{-1}\pi_*(X; V) := \mathbb{Z}[v^{\pm 1}] \otimes_{\mathbb{Z}[v]} \pi_*\mathrm{Map}_*(V, X).
\end{equation*}
Here $\mathbb{Z}[v]$ is a graded ring (with $v$ in degree $d$) and $\pi_*\mathrm{Map}_*(V, X)$ (with $* \geq 2$) is regarded as a graded abelian group on which $v$ acts by
\begin{equation*}
    \pi_*\mathrm{Map}_*(V, X) \xrightarrow{v^*} \pi_*\mathrm{Map}_*(\Sigma^d V, X) \cong \pi_{*+d}\mathrm{Map}_*(V, X).
\end{equation*}

\begin{defn}
\label{def:vnequiv}
A map $f$ of pointed spaces is a \emph{$v_n$-equivalence} if $v^{-1}\pi_*(f; V)$ is an isomorphism.
\end{defn}

This definition depends only on $n$. Indeed, to see that it is independent of the choice of type $n$ space $V$ and $v_n$ self-map $v$, one applies Theorem 7 (the thick subcategory theorem) and Corollary 3.7 (the asymptotic uniqueness of $v_n$ self-maps) of \cite{nilpotence2}. An equivalent way of phrasing the definition above is to say that $f$ is a $v_n$-equivalence if $\Phi_n(f)$ is an equivalence of spectra, with $\Phi_n$ denoting the $n$th Bousfield--Kuhn functor \cite{kuhn_telescopic}.

\begin{rem}
Fix a finite type $n$ \emph{spectrum} $V$ with a $v_n$ self-map $v$ and write $T(n)$ for the mapping telescope
\begin{equation*}
    T(n) := \mathrm{hocolim}(V \xrightarrow{v} \Sigma^{-d} V \xrightarrow{v} \Sigma^{-2d} V \xrightarrow{v} \cdots).
\end{equation*}
By definition a map $f$ of \emph{spectra} is a $v_n$-periodic equivalence if and only if $T(n)_*f$ is an isomorphism. (Although $T(n)$ depends on the choice of $V$, the associated notion of $T(n)_*$-homology isomorphism does not.) Moreover, if $T(n)_*f$ is an isomorphism, then $K(n)_*f$ is an isomorphism. The telescope conjecture asserts the converse, but is only known to hold for $n=0$ and $n=1$. The reader should be warned that the relation between $v_n$-periodic equivalences and $T(n)_*$-homology isomorphisms of \emph{spaces} is much more subtle. In particular, there is no direct implication in either direction. Bousfield gives detailed results relating $v_n$-periodic equivalences of spaces to the notion of \emph{virtual} $K(n)$-equivalences in \cite{bousfield_homotopicallocalizations}.
\end{rem}

We say a map $f\colon X \rightarrow Y$ is \emph{simple}\footnote{We warn the reader that this notion is unrelated to the notion of a simple map in geometric topology, where one demands that the inverse images of points are contractible.} if the homotopy fiber $F$ of $f$ is connected, has abelian fundamental group, and $\pi_1 X$ acts trivially on the homotopy groups of $F$. Thus, a space $Z$ is simple if and only if $Z \rightarrow *$ is a simple map, and the homotopy fiber of a simple map is a simple space. Also, any map between simply-connected spaces is simple.

In addition to \Cref{def:vnequiv}, we say a map $f$ of pointed nilpotent spaces is a $v_0$-equivalence (resp.~$p$-local equivalence) if it is a rational weak homotopy equivalence (resp.~weak equivalence after $\Z_{(p)}$-localization). The aim of this paper is to prove the following Whitehead theorem for periodic homotopy groups:

\begin{thm}
\label{thm:main}
Let $f$ be a simple map of pointed nilpotent finite $CW$-complexes. If $f$ is a $v_n$-equivalence for every $n \geq 0$, then $f$ is a $p$-local equivalence.
\end{thm}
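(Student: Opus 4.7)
The plan is to show that the homotopy fiber $F$ of $f$ is $p$-locally contractible; this is equivalent to $f$ being a $p$-local equivalence. Since $f$ is a simple map between nilpotent spaces, $F$ is itself a connected, simple, nilpotent space, and the long exact sequence for the fibration $F \to X \to Y$ exhibits each $\pi_k F$ as a finitely generated abelian group.

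My first step is to transfer the vanishing hypothesis from $f$ to $F$. The Bousfield-Kuhn functor $\Phi_n$ preserves fiber sequences of simple spaces, so the hypothesis that $f$ is a $v_n$-equivalence for $n \geq 1$ yields $\Phi_n F \simeq *$ for each such $n$, while the $v_0$-equivalence hypothesis gives $F_\Q \simeq *$. In particular, each $\pi_k F_{(p)}$ is a finite $p$-group. The task reduces to showing: a simple nilpotent space $F$ with $\pi_k F_{(p)}$ finite and all $\Phi_n F$ vanishing is $p$-locally contractible, provided it arises as the homotopy fiber of a map between finite nilpotent CWs. This last proviso is essential, as $K(\Z/p, n)$ satisfies every other condition on the list without being contractible.

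By the Bousfield-Hopkins-Ravenel theorem recalled in the introduction, it suffices to establish $K(n)_* F = 0$ for every $n \geq 1$, since combined with $F_\Q \simeq *$ this forces $F_{(p)} \simeq *$. I would attempt this by induction on the chromatic height $n$: assuming $K(m)_* F = 0$ for all $m < n$, combine the vanishing of $\Phi_n F$ with the inductive hypothesis to deduce $K(n)_* F = 0$. The likely mechanism is to build a tower for $F$ whose layers are controlled jointly by the $\Phi_m F$ for $m \leq n$ and the rationalization $F_\Q$, and to argue convergence of that tower by leveraging the finiteness of $X$ and $Y$; candidate towers include the Goodwillie tower of the identity functor or an unstable chromatic tower in the sense of Bousfield.

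The principal obstacle is the bridge from the $T(n)$-type information carried by $\Phi_n$ to the $K(n)$-information demanded by Bousfield-Hopkins-Ravenel. The telescope conjecture is open for $n \geq 2$, and for spaces the gap between $T(n)$- and $K(n)$-equivalences is especially subtle, as already indicated in the introductory remark on virtual $K(n)$-equivalences. I therefore expect the proof to use the finite-CW hypothesis on $X$ and $Y$ in an essential way, presumably through some form of unstable chromatic convergence that pins down the $p$-local homotopy type from the data $(F_\Q, \Phi_1 F, \Phi_2 F, \ldots)$, rather than proceeding via any purely formal $T(n)$-to-$K(n)$ passage.
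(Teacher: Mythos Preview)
Your proposal correctly reduces to the fiber $F$ and honestly identifies the crux---passing from telescopic data ($\Phi_n F \simeq *$) to Morava $K$-theory data---but leaves that step as an admitted gap, and the tools you float (Goodwillie tower, unstable chromatic convergence) are not known to close it. More fundamentally, once you pass to $F$ you have surrendered the finiteness of $X$ and $Y$: the fiber is not a finite complex, so nothing on your list formally distinguishes $F$ from the cautionary example $K(\Z/p,m)$. There \emph{is} an unstable bridge from $v_i$-equivalences to $L_n$-equivalences (Bousfield's \Cref{thm:Bousfield} and its consequence \Cref{thm:Lnperiodicequiv}), but it requires the map to be $(n+1)$-connected; applying it to $F \to *$ at level $n$ would require $F$ to already be $n$-connected, which is what you are trying to prove.

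The paper avoids this circularity by never working with $F$ directly. It runs induction along the Moore--Postnikov tower $\{P_k(f)\}$, showing each $P_k(f) \to Y$ is an equivalence. At the inductive step the fiber of $P_{n+1}(f) \to Y$ is a single $K(\pi,n)$ with $\pi$ finite abelian, and the map $X \to P_{n+1}(f)$ is $(n+1)$-connected \emph{by construction}, so \Cref{thm:Lnperiodicequiv} does apply and shows it is an $L_n$-equivalence. Since $X$ is finite, $\mathbb{K}(n)_* P_{n+1}(f)$ is therefore finitely generated over $\mathbb{K}(n)_*$, where $\mathbb{K}(n)$ is \emph{integral} Morava $K$-theory. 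The contradiction is then a finiteness obstruction: by Ravenel--Wilson, $\mathbb{K}(n)_* K(\pi,n)$ contains a nonzero $p$-divisible (hence non-finitely-generated) summand whenever $\pi \neq 0$, and an Atiyah--Hirzebruch argument over the finite base $Y$ (\Cref{lem:EMfiber}) propagates this to $\mathbb{K}(n)_* P_{n+1}(f)$. The two ideas you are missing are thus (i) working with Moore--Postnikov stages rather than $F$, so as to retain both the connectivity needed for Bousfield's bridge and direct access to the finiteness of $X$; and (ii) using integral $\mathbb{K}(n)$ rather than $K(n)$, so that non-finite-generation, rather than vanishing, becomes the lever.
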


The hypothesis that the spaces involved be finite complexes cannot simply be omitted, as the example $f\colon K(\Z/p, m) \rightarrow *$ for $m\ge 1$ shows. Indeed, the space in question has vanishing $v_n$-periodic homotopy groups for every $n \geq 0$. In fact, one can replace $K(\Z/p,m)$ by (the connected cover of) $\Omega^\infty E$ for any dissonant spectrum $E$ to obtain a larger class of counterexamples.

The plan of this paper is as follows. In \Cref{sec:moravaK} we give a brief review of the integral Morava $K$-theories and their values on Eilenberg--Mac Lane spaces. \Cref{sec:periodichomotopy} reviews some of Bousfield's results on the relation between $v_n$-periodic equivalences and homotopical localizations of the category of pointed spaces. Finally, in \Cref{sec:proof} we prove \Cref{thm:main}.

\subsection*{Acknowledgements}

We would like to thank Tyler Lawson, for helpful conversations concerning multiplications on integral Morava K-theory, as well as Tomer Schlank. We moreover thank the referee for catching an inaccuracy.

The authors would like to thank the Isaac Newton Institute for Mathematical Sciences for support and hospitality during the programme ``Homotopy harnessing higher structures'' when work on this paper was undertaken. This work was supported by EPSRC grant number EP/R014604/1.

\subsection*{Conventions}
Throughout this document, we fix a prime $p$ and all our spaces are assumed to be pointed, connected, nilpotent, and $p$-local. For the theory of arithmetic localizations of nilpotent spaces we refer to \cite[Chapter V]{bousfield_homotopy_1972}. In particular, the homotopy groups of the $R$-localization of $X$ for some subring $R\subset \mathbb{Q}$ will be just $\pi_*X\otimes R$ (with appropriate meaning for $\pi_1$) and similarly for homology. We understand a finite space to be the $p$-localization of a finite CW-complex. If $X$ is a pointed space we write $X\langle n \rangle$ for its $n$-connected cover and $\tau_{\leq n} X$ for its $n$th Postnikov section.

\section{Integral Morava $K$-theories}
\label{sec:moravaK}

In this section we will study the basic properties of integral Morava $K$-theories $\K(n)$. These are $2(p^n-1)$-periodic complex orientable integral lifts of the ordinary Morava $K$-theories $K(n)$ at height $n$ and prime $p$. The $p$-complete version of $\K(n)$ was first studied by Morava in \cite{morava_weil}; for the convenience of the reader, we briefly recall the construction. 

Let $MU$ be ($p$-local) complex cobordism with coefficients $\pi_*MU \cong \Z_{(p)}[t_1,t_2,\ldots]$, where $t_i$ has degree $2i$ and $t_0 := p$. 
Following \cite[Ch.~V]{ekmm}, we extend $t_i$ to a map $\Sigma^{2i}MU \to MU$ and denote  the corresponding cofiber by $M(i)$. 

\begin{defn}
We define an \emph{integral Morava $K$-theory} $\K(n)$ at height $n \ge 1$ and prime $p$ as 
\[
\K(n) := MU[t_{p^n-1}^{-1}] \otimes \bigotimes_{i \notin \{0,p^n-1\}}M(i).
\]
Here, the smash products are taken over $MU$.
\end{defn}

Note that the construction of $\K(n)$ depends on a choice of generators $t_1,t_2,\ldots,t_{n-1},t_n,\ldots$; we fix one such choice and consequently omit it from the notation. Furthermore, we shall write $v_n:=t_{p^n-1}$ from now on. The next lemma summarizes the salient features of the integral Morava $K$-theories.

\begin{lem}
The integral Morava $K$-theories have the following properties:
    \begin{enumerate}
        \item For any $p$ and $n$, the spectrum $\K(n)$ has the structure of an even complex orientable ring spectrum with graded commutative coefficient ring $\pi_*\K(n) \cong \Z_{(p)}[v_{n}^{\pm 1}]$, where $v_n$ is in degree $2(p^n-1)$. 
        \item Write $K(n)$ for $2(p^n-1)$-periodic Morava $K$-theory. There is a cofiber sequence
        \[
        \K(n) \xrightarrow{p} \K(n) \to K(n), 
        \]
        so in particular an equality of Bousfield classes $\langle \K(n) \rangle = \langle K(0) \oplus K(n) \rangle$.
        \item For any $p$ and $n$, $\K(n)$ has the structure of an $\mathbb{A}_{\infty}$-ring spectrum. If $p>2$, then the multiplication on $\K(n)$ is homotopy commutative. 
    \end{enumerate}
\end{lem}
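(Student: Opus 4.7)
The plan is to deduce all three statements from the module-theoretic machinery of \cite{ekmm} combined with standard obstruction theory.

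For part (1), the key observation is that $(t_0, t_1, t_2, \ldots)$ forms a regular sequence on $\pi_* MU = \Z_{(p)}[t_1, t_2, \ldots]$, so iteratively smashing over $MU$ with the modules $M(i)$ for $i \notin \{0, p^n - 1\}$ kills these generators without introducing higher Tor; this follows from the collapse of the relevant Künneth spectral sequences of \cite[Ch.~IV]{ekmm} and yields homotopy $\Z_{(p)}[v_n]$. Inverting $v_n$ then gives the claimed coefficient ring $\Z_{(p)}[v_n^{\pm 1}]$. The homotopy ring structure and complex orientation descend from those of $MU$ along the canonical map $MU \to \K(n)$.

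For part (2), performing the same construction but additionally smashing with $M(0) = \cofib(p \colon MU \to MU)$ produces a spectrum with coefficients $\F_p[v_n^{\pm 1}]$, which is $2(p^n-1)$-periodic Morava $K$-theory. The cofiber sequence $\K(n) \xrightarrow{p} \K(n) \to K(n)$ then follows since $M(0)$ is by definition the cofiber of multiplication by $p$. For the Bousfield class, the cofiber sequence gives $\langle \K(n) \rangle \leq \langle \K(n)[p^{-1}] \rangle \vee \langle K(n) \rangle$, and the first summand coincides with $\langle K(0) \rangle$ since $\K(n)[p^{-1}]$ is a nonzero rational spectrum. The reverse inclusion is immediate, as both $K(0)$ and $K(n)$ are $\K(n)$-module spectra and hence lie in the Bousfield class of $\K(n)$.

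For part (3), an $\mathbb{A}_\infty$-structure on $\K(n)$ is provided by the general theory of $\mathbb{A}_\infty$ quotients of $\mathbb{E}_\infty$-ring $MU$-modules by regular sequences developed in \cite[Ch.~V]{ekmm}, and such structures are inherited by the localization inverting $v_n$. For homotopy commutativity at $p > 2$, the difference $\mu - \mu \circ \tau$ is classified by an element of $\K(n)^0(\K(n) \otimes_{MU} \K(n))$; this group can be computed by a Künneth argument from $\K(n)_*$ and vanishes at odd primes because $\K(n)_*$ is concentrated in even degrees, so that the relevant odd-degree obstruction class is forced to zero. At $p=2$ the analogous $2$-torsion obstruction is exactly what makes ordinary $K(n)$ non-commutative. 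The main obstacle I anticipate is identifying and carefully bookkeeping this obstruction group, so as to cleanly distinguish the odd-primary and $2$-primary cases without entangling the analysis with the $\mathbb{A}_\infty$ coherence data already supplied by EKMM.
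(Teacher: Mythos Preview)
Your treatments of parts (1) and (2) are correct and essentially coincide with the paper's argument.

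Part (3), however, has two genuine gaps.

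\textbf{The $\mathbb{A}_{\infty}$-structure.} The machinery of \cite[Ch.~V]{ekmm} only constructs \emph{homotopy} associative ring structures on iterated quotients $MU/I$; it does not produce $\mathbb{A}_{\infty}$-structures. The paper instead invokes Angeltveit \cite[Cor.~3.2]{angeltveit_thha}, who shows that for an even commutative $S$-algebra $R$ and a non-zero divisor $x$, the quotient $R/x$ admits an $\mathbb{A}_{\infty}$ $R$-algebra structure; iterating this yields the claim. Your citation of EKMM for this step is a misattribution.

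\textbf{Homotopy commutativity.} Your obstruction argument is internally inconsistent (you place the class in $\K(n)^0$ but then call it an ``odd-degree obstruction'') and, more importantly, cannot work as stated: $\K(n)_*$ is concentrated in even degrees at \emph{every} prime, including $p=2$, so evenness alone cannot possibly separate the odd-primary and $2$-primary cases. The difference $\mu - \mu\circ\tau$ genuinely lives in a degree-$0$ group of $MU$-module maps that need not vanish. The paper's argument is sharper: for $p>2$ and $n\ge 1$ one has $|v_n| = 2(p^n-1) \ge 4$, so the coefficients of $MU/I$ are concentrated in degrees divisible by $2(p^n-1) \ge 4$, and \cite[Theorem~V.4.1]{ekmm} shows this stronger periodicity forces the commutativity obstructions to vanish. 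At $p=2$ only divisibility by $2$ is available, and indeed (as the remark following the lemma shows via a Dyer--Lashof argument) $\K(n)$ fails to be homotopy commutative for $p=2$, $n>1$. Your closing sentence anticipates trouble here, and rightly so: the bookkeeping you describe does not resolve it.
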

\begin{proof}
By construction, $\K(n)$ is an even homotopy associative ring spectrum with the indicated coefficients, equipped with a homomorphism $MU \to \K(n)$ of ring spectra. Moreover, multiplication by $p$ on $\K(n)$ induces an equality of Bousfield classes
\[
\langle \K(n) \rangle = \langle \K(n)/p\rangle \oplus \langle p^{-1}\K(n) \rangle.
\]
Since $\K(n)/p \simeq K(n)$ and $\langle p^{-1}\K(n) \rangle = \langle K(0) \rangle$, Claim (2) follows.

It remains to prove (3). In \cite[Cor.~3.2]{angeltveit_thha} Angeltveit shows that a quotient $R/x$ of an even commutative $S$-algebra $R$ by a non-zero divisor $x$ admits an $\mathbb{A}_{\infty}$ $R$-algebra structure. Applying this to $R = MU$ and the spectra $M(i)$ defined above, it follows that $\K(n)$ admits the structure of an $\mathbb{A}_{\infty}$-ring spectrum. Finally, if $p>2$, $n\ge 1$, and $I = (t_1,t_2,\ldots,t_{p^n-2},t_{p^n},t_{p^n+1},\ldots)$, then the coefficients of $MU/I$ are concentrated in degrees divisible by $4 \le 2p^n-2 = |t_{p^n-1}| $. Theorem V.4.1 in \cite{ekmm} implies that $\K(n) = MU/I[t_{p^n-1}^{\pm 1}]$ is a homotopy commutative and associative ring spectrum, so $\K(n)$ is homotopy commutative whenever $p>2$.
\end{proof}

\begin{rem}
If $p=2$ and $n > 1$, then $\K(n)$ is not homotopy commutative. We have learned the following argument from Tyler Lawson; a more detailed account will be contained in forthcoming work by Lawson. Let $p=2$ and suppose for contradiction that the multiplication on $\K(n)$ constructed above is homotopy commutative. Consider then its homotopy commutative connective cover $R = \tau_{\ge 0}\K(n)$ and let $f\colon R \to H = H\F_2$ be the canonical quotient map. It would follow that the image of the induced map in homology $H_*R \to H_*H$ must be closed under the operation $Q_1$, which acts as $Q^{k+1}$ in degree $k$. However, a computation with the Tor spectral sequence for $H \otimes R \simeq (H\otimes BP) \otimes_{BP} R$ shows that this image is
\[
\F_2[\overline{\xi}_1^2, \overline{\xi}_2,\ldots, \overline{\xi}_{n-1}, \overline{\xi}_{n}^2, \overline{\xi}_{n+1},\ldots],
\]
which is not closed under $Q_1$ if $n >1$, because $Q_1(\overline{\xi}_{i}) = Q^{2^i}(\overline{\xi}_{i}) = \overline{\xi}_{i+1}$ for all $i\ge 1$ by \cite[Theorem III.2.2]{bmms86}. This gives the claim.
\end{rem}

Let $D_{p^{\infty}}$ be a $p$-divisible graded $\K(n)_*$-module concentrated in even degrees. Consider the graded commutative $\K(n)_*$-algebra 
\[
\Lambda(D_{p^{\infty}}) := \K(n)_* \oplus D_{p^{\infty}}[-1],
\]
obtained as the split square-zero extension of $\K(n)_*$ by a copy of $D_{p^{\infty}}$ shifted down one homological degree. In our examples, $D_{p^\infty}$ will be a direct sum of copies of $\mathbb{Q}/\mathbb{Z}_{(p)}$ in every even degree. 

\begin{prop}\label{prop:EilenbergMacLane}
Let $\pi$ be a finite abelian $p$-group, $n\ge 1$, and $K(\pi,d)$ the associated Eilenberg--Mac Lane space. Then there is an isomorphism 
\[
\K(n)_*(K(\pi,d)) \cong \Lambda(D_{p^{\infty}})
\]
of graded commutative $\K(n)_*$-algebras, for some $p$-divisible torsion graded even $\K(n)_*$-module $D_{p^{\infty}}$ depending on $\pi$, $n$ and $d$. Moreover, if $\pi$ is non-trivial, then $D_{p^{\infty}} = 0$ if and only if $n < d$.\footnote{In fact, the module $D_{p^{\infty}}$ can be described explicitly in terms of $\pi$, $n$, and $d$, but we will not need the precise formula here.}
\end{prop}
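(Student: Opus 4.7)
The strategy is to bootstrap from the classical Ravenel--Wilson computation of $K(n)_*K(\pi,d)$ by transporting the information across the cofiber sequence $\K(n) \xrightarrow{p} \K(n) \to K(n)$ from the preceding lemma. Recall that Ravenel--Wilson show $K(n)_*K(\pi,d)$ is a finite, even, graded Hopf algebra over $K(n)_*$, which for nontrivial $\pi$ coincides with $K(n)_*$ itself precisely when $n < d$. Writing $X := K(\pi,d)$, the vanishing of $K(n)_{\mathrm{odd}}X$ causes the long exact sequence in $\K(n)$-homology attached to the cofiber sequence to break into short exact sequences
\begin{equation*}
    0 \to \K(n)_{2m}(X)/p \to K(n)_{2m}(X) \to \K(n)_{2m-1}(X)[p] \to 0,
\end{equation*}
while forcing multiplication by $p$ to be injective on $\K(n)_{\mathrm{even}}X$ and surjective on $\K(n)_{\mathrm{odd}}X$.

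To cut the sizes down further I would rationalize. Since $\K(n) \otimes \Q$ is a rational spectrum with coefficients $\Q[v_n^{\pm 1}]$, one has $\K(n)_*(-) \otimes \Q \cong H_*(-;\Q)[v_n^{\pm 1}]$, and as $K(\pi,d)$ is rationally contractible for $\pi$ a finite $p$-group, the reduced part $\widetilde{\K(n)}_*X$ is entirely torsion. Combining this with the consequences of the long exact sequence, $\widetilde{\K(n)}_{\mathrm{even}}X$ is both $p$-torsion-free and torsion, hence zero; while $\widetilde{\K(n)}_{\mathrm{odd}}X$ is $p$-divisible torsion over $\Z_{(p)}$, so a direct sum of Prüfer modules $\Q/\Z_{(p)}$. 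Shifting it up by one gives the desired even module $D_{p^\infty}$ together with an isomorphism of graded $\K(n)_*$-modules $\K(n)_*X \cong \K(n)_* \oplus D_{p^\infty}[-1]$.

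For the multiplicative structure, the augmentation of $X$ makes $\widetilde{\K(n)}_*X$ an ideal in $\K(n)_*X$; since it sits in odd degrees only, the product of any two reduced classes lies in $\widetilde{\K(n)}_{\mathrm{even}}X = 0$, so the extension is square-zero and one obtains $\K(n)_*X \cong \Lambda(D_{p^\infty})$. Finally, $D_{p^\infty} = 0$ iff $\widetilde{\K(n)}_{\mathrm{odd}}X = 0$, which by $p$-divisibility reduces to the vanishing of its $p$-torsion; via the displayed short exact sequence this is equivalent to $\widetilde{K(n)}_*X = 0$, and hence by Ravenel--Wilson to $n < d$. The subtlest point I expect is the graded-commutativity assertion when $p=2$ and $n \geq 2$, since the preceding remark shows $\K(n)$ is not homotopy commutative there; however, this is circumvented by the above square-zero structure together with the centrality of the unit $\K(n)_* \to \K(n)_*X$, which force the resulting algebra to be graded commutative irrespective of any failure in the ambient ring structure.
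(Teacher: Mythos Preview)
Your proof is correct and follows essentially the same route as the paper's: both bootstrap from the Ravenel--Wilson evenness of $K(n)_*K(\pi,d)$ through the long exact sequence for $\K(n)\xrightarrow{p}\K(n)\to K(n)$, use rational acyclicity to kill the even reduced part, and deduce the square-zero algebra structure from the odd concentration of the augmentation ideal. The only cosmetic difference is that the paper defines $D_{p^\infty}$ a priori as the injective hull of $\overline{K(n)}_*K(\pi,d)$ and then matches it with $\overline{\K(n)}_{\mathrm{odd}}K(\pi,d)$, whereas you define it directly as the latter and recover the link to $K(n)$-homology at the end for the vanishing criterion; your treatment of graded commutativity at $p=2$ via centrality of the unit plus square-zero is also slightly more explicit than the paper's.
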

\begin{proof}
By the Ravenel--Wilson computation of the Morava $K$-homology of Eilenberg--Mac Lane spaces \cite{ravenelwilson_knem} \cite[Appendix]{johnsonwilson}, $K(n)_*K(\pi,d)$ is an even finitely generated module over $K(n)_*$, and the same papers establish the vanishing result claimed at the end of the proposition. Recall that, as a consequence of the classification of injective abelian groups (see e.g.\ \cite[Theorem 23.1]{fuchs}), a $p$-divisible $p$-torsion abelian group is uniquely determined by its $p^1$-torsion. Let $D_{p^{\infty}}$ be the uniquely determined even graded $p$-divisible torsion $\K(n)_*$-module whose submodule of $p^1$-torsion elements is isomorphic to the reduced homology $\overline{K(n)}_{*}(K(\pi,d))$, i.e.\ the injective hull of the latter graded group. 

Consider the long exact sequence in homotopy associated to the cofiber sequence
\[
\K(n) \otimes K(\pi,d) \xrightarrow{p} \K(n) \otimes K(\pi,d) \to K(n) \otimes K(\pi,d)
\]
By evenness, multiplication by $p$ on $\overline{\K(n)}_*(K(\pi,d))$ is injective in even degrees and surjective in odd degrees. Since $\Q \otimes \overline{\K(n)}_*(K(\pi,d)) = 0$, it thus follows that $\overline{\K(n)}_*(K(\pi,d))$ is trivial in even degrees and $p$-divisible in odd degrees. We thus get an isomorphism 
\[
\phi\colon \K(n)_*(K(\pi,d)) \cong \Lambda(D_{p^{\infty}})
\]
of $\K(n)_*$-modules. In any $\K(n)_*$-algebra structure on $\Lambda(D_{p^{\infty}})$ the product of two elements of $D_{p^{\infty}}$ must be zero as it is $p$-power torsion and in even degree. Thus, $\Lambda(D_{p^{\infty}})$ carries a unique $\K(n)_*$-algebra structure, namely that of a split square-zero extension, and hence $\phi$ must be an isomorphism of $\K(n)_*$-algebras. 
\end{proof}

Note that we have in particular shown that the ring $\K(n)_*(K(\pi,d))$ is graded commutative also in the case $p=2$, $n>1$, where $\K(n)$ is not a homotopy commutative ring spectrum. The divisible and hence non-finitely generated summands in $\K(n)_*(K(\pi,d))$ will play a crucial role in the proof of the main theorem.

\begin{rem}
We could also work with the functor $L_{K(0) \oplus K(n)}(E_n \otimes (-))$ in place of $\K(n)$ in this paper, where $E_n$ is a Morava $E$-theory of height $n$. The chromatic fracture square for $K(0) \oplus K(n)$ combined with Peterson's lift \cite{peterson_erw} of the Ravenel--Wilson computation to Morava $E$-theory can be used to establish the properties of this functor which we require for our applications. 
\end{rem}

\section{Periodic homotopy and homology equivalences}
\label{sec:periodichomotopy}

The aim of this section is to state and prove \Cref{thm:Lnperiodicequiv}, implying a relation between $v_n$-periodic homotopy groups and $E_n$-homology groups of spaces. We will rely on results concerning periodizations (or nullifications) of spaces as developed by Bousfield \cite{bousfield_periodicity,bousfield_telescopic} and Dror-Farjoun \cite{dror_localization}.

Following Section 4 of \cite{bousfield_telescopic} we begin by fixing a finite CW-complex $V_{n+1}$ of type $n+1$ that is also a suspension. For later reference we will denote by $d_{n+1}$ the lowest dimension in which $V_{n+1}$ has a nonvanishing homotopy group. There is an associated functor
\begin{equation*}
\Lf{n}\colon \mathcal{S}_* \rightarrow \mathcal{S}_*
\end{equation*}
which is the left Bousfield localization with respect to the map $V_{n+1} \rightarrow *$. This functor takes values in spaces $X$ which are $V_{n+1}$-null, meaning that the map $V_{n+1} \rightarrow *$ induces an equivalence
\begin{equation*}
    X \rightarrow \mathrm{Map}(V_{n+1}, X).
\end{equation*}
Also, for any $X$ there is a natural map $\eta_X\colon X \rightarrow \Lf{n} X$ which is homotopy initial for maps into $V_{n+1}$-null spaces. The behaviour of this functor with respect to periodic homotopy groups is as follows (cf.~\S4.6 of \cite{bousfield_telescopic}):
\begin{itemize}
    \item [(i)] The map $\eta_X$ induces an isomorphism of rational homotopy groups.
    \item[(ii)] The map $\eta_X$ induces an isomorphism of $v_i$-periodic homotopy groups for $1 \leq i \leq n$.
    \item[(iii)] The $v_i$-periodic homotopy groups of $\Lf{n} X$ vanish for $i > n$.
\end{itemize}

Let us call a map $\varphi$ an \emph{$\Lf{n}$-equivalence} precisely if $\Lf{n}(\varphi)$ is an equivalence of spaces. A crucial result is the following converse to the above:

\begin{thm}[Bousfield \cite{bousfield_telescopic}, Corollary 4.8]
\label{thm:Bousfield}
A map $\varphi\colon X \rightarrow Y$ of $d_{n+1}$-connected spaces is an $\Lf{n}$-equivalence if and only if it is a $v_i$-equivalence for every $0 \leq i \leq n$.
\end{thm}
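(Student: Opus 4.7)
The \emph{only if} direction is immediate from the properties listed above. Properties (i) and (ii) say that for any space $Z$ the unit $\eta_Z\colon Z\to \Lf{n}Z$ induces an isomorphism on $v_i$-periodic homotopy groups for $0\leq i\leq n$. Applying two-out-of-three to the naturality square $\eta_Y \circ \varphi = \Lf{n}(\varphi)\circ \eta_X$ then shows that whenever $\Lf{n}(\varphi)$ is a weak equivalence, $\varphi$ must be a $v_i$-equivalence in the range $0\leq i\leq n$.

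For the \emph{if} direction I would first reduce to the case where the source and target are themselves $V_{n+1}$-null. Indeed, by the same naturality square and two-out-of-three, together with (i) and (ii), the map $\Lf{n}(\varphi)$ remains a $v_i$-equivalence for $0\leq i\leq n$, and by (iii) both $\Lf{n}X$ and $\Lf{n}Y$ have trivial $v_i$-periodic homotopy for $i>n$, so $\Lf{n}(\varphi)$ is trivially a $v_i$-equivalence in that range as well. It therefore suffices to prove the following local form of the theorem: a map $f\colon X'\to Y'$ between $d_{n+1}$-connected $V_{n+1}$-null spaces which is a $v_i$-equivalence for every $i\geq 0$ is a weak equivalence.

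The plan for this local form is to pass to the homotopy fiber $F$ of $f$. Because the class of $V_{n+1}$-null spaces is closed under homotopy limits, $F$ is again $V_{n+1}$-null; the long exact sequence of homotopy groups shows that $F$ is $(d_{n+1}-1)$-connected and, by the long exact sequence of $v_i$-periodic homotopy groups (i.e.\ applying $\pi_*\mathrm{Map}_*(V_i,-)[v^{-1}]$), has vanishing $v_i$-periodic homotopy for every $i\geq 0$. The entire question is thus reduced to showing that such an $F$ is weakly contractible.

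This last step is the main obstacle: it is a Whitehead theorem internal to the category of $V_{n+1}$-null spaces, asserting that a sufficiently connected $V_{n+1}$-null space whose $v_i$-periodic homotopy vanishes for $0\leq i\leq n$ must be contractible. To prove it, I would argue by induction up the Postnikov tower of $F$. If $F$ were not contractible, one picks the first non-vanishing homotopy group $\pi_m F$ with $m\geq d_{n+1}$ and combines the $V_{n+1}$-nullness of the corresponding Postnikov layer with the Hopkins--Smith periodicity theorem: the nullness constrains which self-maps can act nontrivially on the layer, while the periodicity theorem produces a $v_i$-self-map for some $0\leq i\leq n$ that must detect a non-trivial class, contradicting the vanishing hypothesis. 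The delicate part is arranging the obstruction-theoretic bookkeeping so that non-triviality at a single Postnikov layer genuinely propagates to a non-trivial $v_i$-periodic class in $F$; this rests on Dror-Farjoun's cellular theory of nullifications and is where most of the technical work in Bousfield's original argument lies.
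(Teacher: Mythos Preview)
The paper does not supply its own proof of this statement: it is quoted verbatim as Corollary~4.8 of Bousfield's paper \cite{bousfield_telescopic} and used as a black box. There is therefore nothing in the present paper to compare your proposal against.

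On the substance of what you wrote: the \emph{only if} direction is correct and is exactly the easy implication following from properties (i)--(iii) listed just before the theorem. Your reduction in the \emph{if} direction---replacing $\varphi$ by $\Lf{n}(\varphi)$ and then passing to the homotopy fiber $F$---is a sensible strategy, and the intermediate claims (that $\Lf{n}(\varphi)$ is still a $v_i$-equivalence for all $i\ge 0$, that $F$ is $V_{n+1}$-null and $(d_{n+1}-1)$-connected, and that it has vanishing $v_i$-periodic homotopy via the long exact sequence) are all fine.

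The genuine gap is the final paragraph. You reduce the theorem to the assertion that a $(d_{n+1}-1)$-connected $V_{n+1}$-null space with vanishing $v_i$-periodic homotopy for all $i\ge 0$ is contractible, and then you do not prove this---you describe in outline what an argument \emph{might} look like and then explicitly defer the ``technical work'' to Bousfield's original paper. The sketch you give (pick the bottom nonvanishing $\pi_m F$, combine nullness with the periodicity theorem to produce a detecting $v_i$-self-map) is not an argument: there is no mechanism offered for why $V_{n+1}$-nullness of $F$ says anything about a single Eilenberg--Mac Lane layer $K(\pi_m F,m)$, nor for why a nonzero class in $\pi_m F$ should survive the localization $v^{-1}\pi_*(F;V_i)$ for some $i\le n$. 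That implication is the entire content of the theorem, and Bousfield's proof of it goes through a substantial detour involving his ``periodicity exponents'' and the structure theory of $\Lf{n}$-local Postnikov towers; it is not a routine obstruction-theory bookkeeping exercise. As written, your proposal is an accurate roadmap to where the difficulty lies rather than a proof.
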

We remark that some connectivity assumption is indeed necessary here. As by Ravenel--Wilson $K(n)_*K(\pi, n) \neq 0$ for a non-trivial finite abelian $p$-group $\pi$, we see that the $\Lf{n}$-localization can change under connected covers if the space is not at least $n$-connected. In contrast, $v_n$-periodic homotopy theory is insensitive to connected covers by the following well-known lemma.

\begin{lem}\label{lem:connectedcover}
Let $m, n\geq 1$ and $X$ be an arbitrary space. Then the canonical map $X\langle m \rangle \to X$ is a $v_n$-equivalence.
\end{lem}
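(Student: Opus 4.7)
The plan is to reduce the lemma, via the Postnikov tower of $X$, to the vanishing of $v_n$-periodic homotopy of Eilenberg--Mac Lane spaces probed by a finite type $n$ complex.

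Fix a finite type $n$ CW-complex $V$ equipped with a $v_n$-self-map $v\colon \Sigma^d V \to V$; after replacing $V$ by a suspension if necessary, I may assume $V$ itself is a suspension, so that $\mathrm{Map}_*(V,-)$ takes values in loop spaces and the long exact sequence of homotopy groups of a fiber sequence of mapping spaces holds without any basepoint subtlety. Applying $\mathrm{Map}_*(V,-)$ to the fiber sequence
\[
X\langle m\rangle \to X \to \tau_{\leq m-1}X
\]
and using that localization at $v$ (tensoring with $\Z[v^{\pm 1}]$ over $\Z[v]$) is exact, I reduce to showing that $v^{-1}\pi_*\mathrm{Map}_*(V,\tau_{\leq m-1}X) = 0$.

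Since $\tau_{\leq m-1}X$ has a finite Postnikov tower with successive fibers $K(\pi_k X, k)$ for $1\le k\le m-1$, an induction on $k$ using the fiber sequences $K(\pi_k X, k)\to \tau_{\leq k} X\to \tau_{\leq k-1}X$ (and the trivial base case $\tau_{\leq 0}X = *$) further reduces the problem to proving
\[
v^{-1}\pi_*\mathrm{Map}_*(V, K(\pi, k)) = 0
\]
for every abelian group $\pi$ and every $k\geq 1$.

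For this final step, I invoke the identification $\pi_s\mathrm{Map}_*(V, K(\pi, k)) \cong \widetilde{H}^{k-s}(V;\pi)$. Under this isomorphism the action of $v$ corresponds to pullback along $v\colon \Sigma^d V\to V$, which decreases cohomological degree by $d$. Since $V$ is a finite CW-complex, $\widetilde{H}^*(V;\pi)$ is concentrated in a bounded range of degrees, so the $v$-action is nilpotent and the localization $v^{-1}\widetilde{H}^*(V;\pi)$ vanishes, as required. The argument is essentially routine; the only bookkeeping point is the exactness of the long exact sequence of homotopy groups of mapping spaces, which is handled by assuming $V$ is a suspension so that all relevant mapping spaces are loop spaces.
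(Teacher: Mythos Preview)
Your argument is correct, modulo two inconsequential slips: with the paper's convention that $X\langle m\rangle$ is the $m$-connected cover, the fiber sequence reads $X\langle m\rangle \to X \to \tau_{\leq m}X$ rather than $\tau_{\leq m-1}X$; and for $k=1$ the group $\pi_1 X$ need only be nilpotent (per the paper's standing conventions), not abelian, though the vanishing of $v^{-1}\pi_*\mathrm{Map}_*(V,K(\pi,1))$ is still immediate since $K(\pi,1)$ is $1$-truncated. The paper's proof, however, is a one-liner that bypasses all of this machinery: it simply observes that $v^{-1}\pi_k(X;V)$ is the colimit of $\pi_{k+Nd}\mathrm{Map}_*(V,X)$ as $N\to\infty$, and for $N$ large these groups depend only on $X\langle m\rangle$ because $V$ is a finite complex. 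Your fiber-sequence reduction and Postnikov decomposition are sound but unnecessary---indeed, once you have reduced to $\tau_{\leq m}X$ you could already stop, since any truncated target gives $\mathrm{Map}_*(V,-)$ with bounded homotopy and hence trivial $v$-localization. What your approach buys is an explicit cohomological description of the obstruction groups, which is not needed for this lemma.
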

\begin{proof}
With notation as in the introduction, $v^{-1}\pi_k(X;V)$ depends only on $\pi_{k+Nd}\mathrm{Map}(V,X)$ for $N$ large (and $d$ positive), which in turn only depends on $X\langle m \rangle$ for $m$ large. 
\end{proof}

The next lemma shows that the localization functor $\Lf{n}$ described above is compatible with its stable counterpart
\begin{equation*}
    L_n^f\colon \mathrm{Sp} \rightarrow \mathrm{Sp},
\end{equation*}
which localizes away from the finite type $n+1$ spectrum $\Sigma^\infty V_{n+1}$. By construction, $L_n^f$ is a finite and thus smashing localization functor, i.e., $L_n^fX \simeq X \otimes L_n^fS^0$ for any spectrum $X$.

\begin{lem}
\label{lem:Lnflocal}
If $E$ is an $L_n^f$-local spectrum and $\varphi$ is an $\Lf{n}$-equivalence of spaces, then $\varphi$ is also an $E_*$-equivalence.
\end{lem}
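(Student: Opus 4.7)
The strategy is to first show that $\Sigma^\infty\varphi$ is an $L_n^f$-equivalence of spectra, and then deduce the conclusion from the fact that $L_n^f$ is smashing.

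For the first step, recall that by construction a map $\varphi\colon X \to Y$ is an $\Lf{n}$-equivalence if and only if for every $V_{n+1}$-null pointed space $Z$ the induced map $\mathrm{Map}_*(Y,Z) \to \mathrm{Map}_*(X,Z)$ is a weak equivalence. The crucial observation is then that for every $L_n^f$-local spectrum $F$ and every integer $k$, the space $\Omega^{\infty-k}F$ is $V_{n+1}$-null. This follows from the $(\Sigma^\infty,\Omega^\infty)$-adjunction
\[
\mathrm{Map}_*(V_{n+1},\Omega^{\infty-k}F) \simeq \Omega^\infty \mathrm{map}_{\mathrm{Sp}}(\Sigma^\infty V_{n+1},\Sigma^k F),
\]
together with the fact that $\Sigma^k F$ is again $L_n^f$-local (so admits no nontrivial maps from the finite type $n+1$ spectrum $\Sigma^\infty V_{n+1}$), which forces the right-hand side to be contractible.

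Applying this for all $k$ simultaneously, I conclude that $\mathrm{map}_{\mathrm{Sp}}(\Sigma^\infty Y,F) \to \mathrm{map}_{\mathrm{Sp}}(\Sigma^\infty X,F)$ is an equivalence of mapping spectra for every $L_n^f$-local $F$, which is precisely the statement that $\Sigma^\infty\varphi$ is an $L_n^f$-equivalence of spectra.

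For the second step, let $W$ denote the cofiber of $\Sigma^\infty\varphi$, so that $L_n^f W \simeq 0$ by what we have just shown. Since $L_n^f$ is smashing, any $L_n^f$-local spectrum $E$ is canonically a module over $L_n^f S^0$, and therefore
\[
E \otimes W \simeq E \otimes L_n^f S^0 \otimes W \simeq E \otimes L_n^f W \simeq 0.
\]
Hence $E \otimes \Sigma^\infty\varphi$ is an equivalence, i.e., $\varphi$ is an $E_*$-equivalence.

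The only genuinely non-formal input is the bridge between $V_{n+1}$-nullness on spaces and $L_n^f$-locality on spectra via the $(\Sigma^\infty,\Omega^\infty)$-adjunction applied in every degree; once that step is in hand, the remainder of the argument is a formal manipulation with smashing localizations, so I do not anticipate any substantial obstacle.
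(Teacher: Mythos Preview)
Your proof is correct and follows essentially the same route as the paper: both arguments show that $\Omega^\infty$ of an $L_n^f$-local spectrum is $V_{n+1}$-null via the $(\Sigma^\infty,\Omega^\infty)$-adjunction, deduce that $\Sigma^\infty\varphi$ is an $L_n^f$-equivalence, and then finish using that $L_n^f$ is smashing. The only cosmetic differences are that you quantify over shifts $k$ to get an equivalence of mapping spectra (the paper is content with the mapping space) and you phrase the smashing step via the cofiber $W$ rather than directly via $E\otimes L_n^f\Sigma^\infty\varphi$.
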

\begin{proof}
If $E$ is an $L_n^f$-local spectrum (i.e., $\overline{E}_*(V_{n+1}) = 0$), then $\Omega^\infty E$ is $V_{n+1}$-null and thus an $\Lf{n}$-local space. Hence, if $\varphi$ is an $\Lf{n}$-equivalence of spaces, then it follows for any $L_n^f$-local spectrum $E$ that $\mathrm{map}(\Sigma^{\infty}\varphi,E) \simeq \mathrm{Map}(\varphi,\Omega^{\infty}E)$ is an equivalence, so $\Sigma^\infty \varphi$ is an $L_n^f$-equivalence of spectra. Since $L_n^f$ is smashing, we get
\[
E \otimes \Sigma^{\infty}\varphi \simeq L_n^f(E) \otimes \Sigma^{\infty}\varphi \simeq E \otimes L_n^f\Sigma^{\infty}\varphi,
\]
which is an equivalence as observed above.
\end{proof}

We write $L_n$ for the Bousfield localization with respect to the $n$th Morava $E$-theory $E_n$. Equivalently, it is Bousfield localization with respect to the direct sum
\begin{equation*}
    K(0) \oplus K(1) \oplus \cdots \oplus K(n).
\end{equation*}
By \Cref{lem:Lnflocal}, any $\Lf{n}$-equivalence of spaces is in particular an $L_n$-equivalence. 

The point of this section is the following variation on (one direction of) \Cref{thm:Bousfield}:

\begin{prop}
\label{thm:Lnperiodicequiv}
Let $\varphi\colon X \rightarrow Y$ be an $(n+1)$-connected map (i.e., a map with $n$-connected homotopy fiber). If $\varphi$ is a $v_i$-equivalence for $0 \leq i \leq n$, then it is an $L_n$-equivalence.
\end{prop}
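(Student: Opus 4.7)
The plan is to reduce the claim, via a Serre spectral sequence for $E_n$-homology applied to the fibration $F \to X \to Y$, to showing that the homotopy fiber $F$ of $\varphi$ is $L_n$-acyclic, that is, $\tilde E_n(F) = 0$. Since $\varphi$ is $(n+1)$-connected, $F$ is $n$-connected, and from the long exact sequence in $v_i$-periodic homotopy groups associated with $F \to X \to Y$ the hypothesis implies that $F$ itself has trivial $v_i$-periodic homotopy for $0 \leq i \leq n$. In particular (the case $i = 0$), $F$ is rationally trivial, so each $\pi_k F$ is a $p$-local torsion abelian group.

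To prove that $F$ is $L_n$-acyclic, I would decompose it via the fiber sequence
\[
F\langle d_{n+1}\rangle \longrightarrow F \longrightarrow \tau_{\leq d_{n+1}} F
\]
and handle the two ends separately. The source $F\langle d_{n+1}\rangle$ is $d_{n+1}$-connected; by \Cref{lem:connectedcover} the canonical map $F\langle d_{n+1}\rangle \to F$ is a $v_i$-equivalence for $i \geq 1$, and the rational triviality of $F$ passes to the cover, giving $v_0$-triviality as well. Applying \Cref{thm:Bousfield} to $F\langle d_{n+1}\rangle \to \ast$ then yields that $F\langle d_{n+1}\rangle$ is $\Lf{n}$-acyclic, and hence $L_n$-acyclic by \Cref{lem:Lnflocal}.

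The base $\tau_{\leq d_{n+1}} F$ is a finite Postnikov tower with layers $K(\pi_k F, k)$ for $n+1 \leq k \leq d_{n+1}$, each homotopy group being $p$-local torsion. The Ravenel--Wilson computation (as used in the proof of \Cref{prop:EilenbergMacLane}), combined with the rational triviality of torsion Eilenberg--Mac Lane spaces, shows that $K(A, k)$ is $K(i)$-acyclic whenever $A$ is a $p$-local torsion abelian group and $0 \leq i \leq n < k$; the case of infinite torsion groups reduces to the finite case via the filtered colimit formula $K(A, k) = \mathrm{colim}\, K(A', k)$ over finite subgroups. Hence each layer is $L_n$-acyclic, and an induction across the finite tower using the $E_n$-homology Serre spectral sequence shows $\tau_{\leq d_{n+1}} F$ is $L_n$-acyclic. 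One more Serre spectral sequence, applied to the displayed fiber sequence above, then gives $L_n$-acyclicity of $F$.

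The main subtlety I anticipate is verifying convergence of the Serre spectral sequences for $E_n$-homology in each step; this should follow from the standard criteria given the nilpotence and connectivity of the spaces involved. The central conceptual point is that $F$ need not be $d_{n+1}$-connected, so Bousfield's theorem does not apply directly to $F \to \ast$; one must split $F$ into a highly connected piece handled by Bousfield and a bounded Postnikov truncation controlled by the chromatic vanishing of Morava $K$-theories on Eilenberg--Mac Lane spaces in degrees exceeding the height.
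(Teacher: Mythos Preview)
Your proposal is correct and follows essentially the same route as the paper: reduce to showing the fiber $F$ is $L_n$-acyclic, split $F$ via the fiber sequence $F\langle d_{n+1}\rangle \to F \to \tau_{\leq d_{n+1}}F$, handle the cover by \Cref{thm:Bousfield} and \Cref{lem:Lnflocal}, and handle the truncation by Ravenel--Wilson and a finite Postnikov induction. The paper packages your Serre spectral sequence steps into \Cref{lem:fibration} (an Atiyah--Hirzebruch argument), which also disposes of your convergence worry; your colimit remark for infinite torsion groups is a detail the paper leaves implicit.
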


The proof of this proposition will use the following well-known lemma:

\begin{lem}
\label{lem:fibration}
Consider a spectrum $E$ and a diagram of spaces
\[
\begin{tikzcd}
F \ar{d}\ar{r} & F' \ar{d} \\
Y \ar{d}\ar{r} & Y' \ar{d} \\
X \ar{r} & X'
\end{tikzcd}
\]
in which the columns are fiber sequences. If the bottom map is an equivalence and the top map is an $E_*$-equivalence, then the middle map is also an $E_*$-equivalence.
\end{lem}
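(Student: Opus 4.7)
The plan is to reduce to the case where the two fibrations share a common base and then apply the comparison theorem for the Serre spectral sequence in $E$-homology.

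For the reduction, replace $Y'$ by the pullback $\widetilde{Y}' := Y' \times_{X'} X$. Since $X \to X'$ is an equivalence, so is $\widetilde{Y}' \to Y'$, and in particular an $E_*$-equivalence. The fiber of $\widetilde{Y}' \to X$ is still $F'$ and the top map $F \to F'$ is unchanged, so it suffices to treat the case where the bottom map is $\id_X$ and $f\colon Y \to \widetilde Y'$ is a map of fibrations over $X$ which restricts to $F \to F'$ on fibers.

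With both fibrations over $X$, consider the $E$-homology Serre spectral sequences
\[
E^2_{p,q} = H_p(X;\, E_q F) \Rightarrow E_{p+q}(Y), \qquad \widetilde{E}^2_{p,q} = H_p(X;\, E_q F') \Rightarrow E_{p+q}(\widetilde{Y}'),
\]
where the coefficients are local systems on $X$ encoding the monodromy of the two fibrations. The map $f$ induces a map of spectral sequences which on the $E^2$-page is the map on local-coefficient homology induced by the isomorphism $E_*F \to E_*F'$. Zeeman's comparison theorem then yields an isomorphism on abutments, as desired. Equivalently, one can filter both $Y$ and $\widetilde{Y}'$ by preimages of the skeleta of a CW structure on $X$, observe that the successive relative cofibers are wedges of $n$-fold suspensions of fibers (so are mapped by an $E_*$-equivalence), induct up the filtration via the five lemma in $E$-homology, and pass to the colimit.

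The one point requiring care is the compatibility of the local coefficient systems on $X$: one needs $E_*F \to E_*F'$ to be an isomorphism of $\pi_1(X)$-modules (with appropriate care for higher monodromy when using the cellular filtration), not merely of graded groups. This is automatic, however, since $f$ lives over $X$ and so the induced map on fibers is equivariant for the path-lifting action.
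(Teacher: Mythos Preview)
Your proof is correct and takes essentially the same approach as the paper: both argue by comparing the Atiyah--Hirzebruch--Serre spectral sequences of the two fibrations and observing that the hypotheses force an isomorphism on $E^2$-pages, hence on abutments. Your version simply adds more detail---the pullback reduction to a common base, the local-coefficient compatibility check, and an alternative cellular-filtration argument---while the paper compresses all of this into a single sentence.
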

\begin{proof}
The induced map of Atiyah--Hirzebruch spectral sequences for these two fibrations gives an isomorphism of $E^2$-pages, from which the result follows immediately.
\end{proof}

\begin{proof}[Proof of \Cref{thm:Lnperiodicequiv}]
Write $F$ for the homotopy fiber of $\varphi$. By assumption it is an $n$-connected space with vanishing $v_i$-periodic homotopy groups for $i \leq n$. By \Cref{lem:fibration} it suffices to show that $L_n F$ is contractible. Consider the fiber sequence
\begin{equation*}
    F\langle d_{n+1} \rangle \rightarrow F \rightarrow \tau_{\leq d_{n+1}}F.
\end{equation*}
The space $F\langle d_{n+1} \rangle$ is $d_{n+1}$-connected and has vanishing $v_i$-periodic homotopy groups for $i \leq n$ by \Cref{lem:connectedcover}. Thus \Cref{thm:Bousfield} implies that $\Lf{n}(F\langle d_{n+1} \rangle)$ is contractible and \Cref{lem:Lnflocal} implies that the same is true for $L_n(F\langle d_{n+1} \rangle)$. Applying \Cref{lem:fibration} to the fiber sequence above, we see that it suffices to prove that $L_n \tau_{\leq d_{n+1}}F$ is contractible. Using \Cref{lem:fibration} together with the vanishing of $(E_n)_*K(\mathbb{Z}/p,k)$ for $k > n$ established by Ravenel and Wilson (cf.~\Cref{prop:EilenbergMacLane}), a finite induction on the Postnikov tower of $\tau_{\leq d_{n+1}}F$ now gives the desired conclusion. Note that we have used the fact that the homotopy groups of $F$ are torsion.
\end{proof}

\section{Proof of the main result}
\label{sec:proof}

Before we prove our Whitehead theorem in its general form, we will prove it for the special case of a map $X \to \ast$ to demonstrate the basic idea.

\begin{prop}\label{prop:absolutecase}
Let $X$ be a nilpotent finite space with abelian fundamental group such that $X \to \ast$ is a $v_n$-equivalence for all $n\geq 0$. Then $X$ is contractible.
\end{prop}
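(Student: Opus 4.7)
The plan is to argue by contradiction via the first non-vanishing Postnikov stage of $X$, contrasting the finite generation of $\K(d)$-homology of finite spaces with the divisible summand that appears in $\K(d)_\ast K(\pi,d)$. First, since $X \to \ast$ is a $v_0$-equivalence the homotopy groups of $X$ are all torsion; combined with $X$ being a nilpotent finite CW-complex, each $\pi_i X$ is a finite abelian $p$-group. Assume for contradiction $X \not\simeq \ast$, and let $d \geq 1$ be minimal with $\pi := \pi_d X \neq 0$, so that we have a Postnikov fiber sequence $X\langle d\rangle \to X \to K(\pi, d)$.

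The main step is to show that $X \to K(\pi, d)$ is a $\K(d)$-homology equivalence. By \Cref{lem:connectedcover} and the hypothesis, the composite $X\langle d\rangle \to X \to \ast$ is a $v_i$-equivalence for every $i \geq 1$; it is also a $v_0$-equivalence because the homotopy fiber of $X\langle d\rangle \to X$ is $K(\pi, d-1)$, which is rationally trivial. Since $X\langle d\rangle$ is $d$-connected, the map $X\langle d\rangle \to \ast$ is $(d+1)$-connected, so \Cref{thm:Lnperiodicequiv} with $n = d$ yields $L_d X\langle d\rangle \simeq \ast$. As the Bousfield class $\langle \K(d)\rangle = \langle K(0) \oplus K(d)\rangle$ is dominated by $\langle E_d\rangle$, this forces $\tilde{\K}(d)_\ast X\langle d\rangle = 0$. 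Applying \Cref{lem:fibration} with $E = \K(d)$ to the natural comparison of the above fiber sequence with the trivial fiber sequence $\ast \to K(\pi, d) \xrightarrow{\id} K(\pi, d)$ then shows that $X \to K(\pi, d)$ is a $\K(d)$-equivalence, and hence $\tilde{\K}(d)_\ast X \cong \tilde{\K}(d)_\ast K(\pi, d)$.

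This last isomorphism is the source of the contradiction. By \Cref{prop:EilenbergMacLane} applied with $n = d$ and $\pi \neq 0$, the right hand side is a non-zero shifted copy of the $p$-divisible torsion module $D_{p^\infty}$, and in particular contains non-zero infinitely $p$-divisible elements. On the other hand, because $X$ is a finite $p$-local CW-complex, $\K(d) \otimes \Sigma^\infty X$ is a perfect $\K(d)$-module spectrum, so $\tilde{\K}(d)_\ast X$ is finitely generated over $\K(d)_\ast = \Z_{(p)}[v_d^{\pm 1}]$; rational triviality of $X$ additionally forces this module to be annihilated by some power of $p$. A finitely generated module over $\Z_{(p)}[v_d^{\pm 1}]$ that is bounded $p$-torsion admits no non-zero infinitely $p$-divisible element, giving the desired contradiction.

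I do not foresee a substantial obstacle: the key computational input, \Cref{prop:EilenbergMacLane}, is already established, and the remainder is a clean assembly of \Cref{thm:Lnperiodicequiv} with \Cref{lem:fibration} and elementary Postnikov-tower reasoning. The only mild care needed is the verification that $X\langle d\rangle \to \ast$ is a $v_0$-equivalence, which amounts to the fact that the fiber $K(\pi, d-1)$ is rationally trivial since $\pi$ is a finite $p$-group.
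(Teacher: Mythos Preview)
Your proof is correct and follows essentially the same approach as the paper's: both isolate the bottom nonvanishing homotopy group $\pi=\pi_d X$, use \Cref{thm:Lnperiodicequiv} (together with \Cref{lem:connectedcover}) to kill the $\K(d)$-homology of $X\langle d\rangle$, and then invoke \Cref{lem:fibration} and \Cref{prop:EilenbergMacLane} to contradict the finite generation of $\K(d)_\ast X$. The only cosmetic difference is that the paper phrases this as an induction showing $X$ is $n$-connected for all $n$, whereas you phrase it as a contradiction at the minimal $d$.
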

\begin{proof}
Note that the assumption on $X$ implies that all homotopy groups of $X$ are finite. We show by induction that $X$ is $n$-connected for every $n$, the base case being $n=0$. Assume thus that $X$ is already $(n-1)$-connected and consider the fiber sequence
\[X\langle n \rangle \to X \to K(\pi_nX, n).\]
As $K(\pi_nX, n)$ and $X$ have trivial $v_i$-periodic homotopy groups for $i\le n$, so does $X\langle n\rangle$. By \Cref{thm:Lnperiodicequiv} this implies that $L_n X\langle n \rangle$ is contractible and thus $\K(n)_*X \cong \K(n)_*(K(\pi_nX,n))$ by \Cref{lem:fibration}. As $X$ is a finite CW-complex, this must be a finitely generated $\K(n)_*$-module. By \Cref{prop:EilenbergMacLane} this is only possible if $\pi_nX =0$, implying that $X$ is $n$-connected. 
\end{proof}
\begin{rem}
Observe that the previous proposition does not need that $X$ is simple, only that it is nilpotent with abelian fundamental group. It is therefore slightly stronger than the specialization of \Cref{thm:main} to the absolute case. In fact with some care the hypothesis that $X $ be nilpotent can also be removed in \Cref{prop:absolutecase}.
\end{rem}



The remainder of this section is devoted to the proof of \Cref{thm:main}. We will set up an induction using the Moore--Postnikov tower of $f$. This is a tower of factorizations of $f$
\[
\begin{tikzcd}
& \vdots \ar{d} & \\
& P_2(f) \ar{d} \ar{ddr} & \\
& P_1(f) \ar{d}\ar{dr} & \\
X \ar{r}\ar{ur}\ar{uur} & P_0(f) \ar{r} & Y
\end{tikzcd}
\]
characterized (up to equivalence) by the following properties:
\begin{itemize}
    \item[(1)] The map $X \rightarrow P_n(f)$ is $n$-connected, meaning it is an isomorphism on $\pi_i$ for $i < n$ and surjective on $\pi_n$.
    \item[(2)] The map $P_n(f) \rightarrow Y$ is an isomorphism on $\pi_i$ for $i > n$ and injective on $\pi_n$.
\end{itemize}
Note that this implies that $\pi_n(P_n(f)) = \im(\pi_n f)$. If $Y$ is contractible, the Moore--Postnikov tower reproduces the Postnikov tower of $X$; if $X$ is contractible, it gives the Whitehead tower of $Y$. Our assumption that $f$ is a simple map guarantees that the Moore--Postnikov tower is in fact a tower of principal fibrations, see \cite{may_simplicial}.

Given our assumption that $X$ and $Y$ are connected and $\pi_1 f$ is surjective (since the homotopy fiber of $f$ is connected), the map $P_1(f) \rightarrow Y$ is an equivalence. Our inductive hypothesis will be that $P_n(f) \rightarrow Y$ is an equivalence, from which we will deduce that $P_{n+1}(f) \rightarrow Y$ is an equivalence as well. Since 
\begin{equation*}
X \rightarrow \mathrm{holim}_n P_n(f)
\end{equation*}
is an equivalence, this proves the theorem.

We start with the following straightforward observation:

\begin{lem}
\label{lem:MP}
The maps $X \rightarrow P_n(f)$ and $P_n(f) \rightarrow Y$ are $v_i$-equivalences for all $n, i \geq 0$.
\end{lem}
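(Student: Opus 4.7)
The plan is to apply the two-out-of-three property of $v_i$-equivalences and reduce the claim to the identification of the fibers in the Moore--Postnikov tower. Writing $F$ for the homotopy fiber of $f$, the Bousfield--Kuhn functor $\Phi_i$ takes the fiber sequence $F \to X \to Y$ to a fiber sequence of spectra, so the hypothesis that $f$ is a $v_i$-equivalence for every $i \geq 0$ is equivalent to the statement that $F$ has vanishing $v_i$-periodic homotopy groups for all $i$. In the factorization $X \to P_n(f) \to Y$ it therefore suffices to show that one of the two maps is a $v_i$-equivalence; the other will follow by two-out-of-three.

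I would focus on the map $X \to P_n(f)$. Let $F' := \fib(X \to P_n(f))$ and $F'' := \fib(P_n(f) \to Y)$. The octahedral axiom produces a fiber sequence $F' \to F \to F''$. From the defining properties of the Moore--Postnikov factorization and the associated long exact sequences of homotopy groups, one immediately reads off $\pi_i F' = 0$ for $i < n$ and $\pi_i F'' = 0$ for $i \geq n$. Feeding these vanishings into the long exact sequence of $F' \to F \to F''$ yields $\pi_i F' \cong \pi_i F$ for $i \geq n$ and $\pi_i F'' \cong \pi_i F$ for $i < n$, which identifies $F' \simeq F\langle n-1 \rangle$ and $F'' \simeq \tau_{\leq n-1} F$.

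With this identification in hand, \Cref{lem:connectedcover} shows that $F\langle n-1 \rangle \to F$ is a $v_i$-equivalence, so $F\langle n-1 \rangle$ also has vanishing $v_i$-periodic homotopy. Applying $\Phi_i$ to $F' \to X \to P_n(f)$ then shows $X \to P_n(f)$ is a $v_i$-equivalence. Finally, two-out-of-three applied to the factorization of the $v_i$-equivalence $f$ yields that $P_n(f) \to Y$ is a $v_i$-equivalence as well.

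The argument is essentially formal and I do not expect a serious obstacle: the only non-trivial input is the identification of the fibers $F'$ and $F''$ from the Moore--Postnikov data, and this is a routine homotopy group comparison. The cleanness of the proof relies on the exactness of $\Phi_i$, which allows us to reduce everything to vanishing of periodic homotopy of a single auxiliary space, together with \Cref{lem:connectedcover} which decouples the connected-cover issue entirely.
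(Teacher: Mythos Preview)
Your argument is correct and uses the same core ingredients as the paper---\Cref{lem:connectedcover} together with two-out-of-three---but you attack the two maps in the opposite order and with more machinery than necessary. The paper handles $P_n(f)\to Y$ directly: since this map is an isomorphism on $\pi_j$ for $j>n$, it induces an equivalence on $n$-connected covers, and \Cref{lem:connectedcover} immediately gives that it is a $v_i$-equivalence for $i\ge 1$ (without ever using the hypothesis on $f$); the case $i=0$ is dealt with separately, and two-out-of-three then handles $X\to P_n(f)$. Your route instead identifies the fiber of $X\to P_n(f)$ as $F\langle n-1\rangle$ via the octahedral axiom and then applies the Bousfield--Kuhn functor. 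This is fine, but the fiber identification and the appeal to exactness of $\Phi_i$ are unnecessary once you notice that the connected covers of $P_n(f)$ and $Y$ already agree on the nose. One small caution: your uniform phrasing via $\Phi_i$ does not literally cover $i=0$, since there is no Bousfield--Kuhn functor in that case; you would need to argue rationally by hand (which is easy, as the fiber has finite homotopy groups).
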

\begin{proof}
For $i=0$ this is clear. Since $P_n(f) \rightarrow Y$ induces an equivalence on $n$-connected covers, it is a $v_i$-equivalence for any $i > 0$ using \Cref{lem:connectedcover}. The statement for $X \rightarrow P_n(f)$ now follows by two-out-of-three.
\end{proof}

\begin{lem}
The map $X \rightarrow P_{n+1}(f)$ is an $L_n$-equivalence.
\end{lem}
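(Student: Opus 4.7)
My plan is to observe that this lemma is essentially an immediate application of \Cref{thm:Lnperiodicequiv}, once its two hypotheses are verified for the map $\varphi := X \to P_{n+1}(f)$.

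First I would verify the connectivity hypothesis. By the defining property (1) of the Moore--Postnikov tower, $\varphi$ induces an isomorphism on $\pi_i$ for $i<n+1$ and a surjection on $\pi_{n+1}$. Reading off the long exact sequence of homotopy groups, the homotopy fiber of $\varphi$ is $n$-connected, which is precisely what is meant in \Cref{thm:Lnperiodicequiv} by saying that $\varphi$ is an $(n+1)$-connected map.

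Second, \Cref{lem:MP} (applied with the Moore--Postnikov stage $n+1$ rather than $n$) already tells us that $\varphi$ is a $v_i$-equivalence for every $i \geq 0$; in particular, it is a $v_i$-equivalence in the range $0 \leq i \leq n$. Both hypotheses of \Cref{thm:Lnperiodicequiv} are therefore satisfied, so the conclusion that $\varphi$ is an $L_n$-equivalence follows at once.

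There is no genuine obstacle in this step: the actual work has already been done in \Cref{thm:Lnperiodicequiv}, whose proof combined Bousfield's theorem on $\Lf{n}$-equivalences of highly connected spaces, the Ravenel--Wilson vanishing for Eilenberg--Mac~Lane spaces, and a Postnikov induction. Here we are only packaging those inputs for the particular map $X \to P_{n+1}(f)$ so that the induction on the Moore--Postnikov tower of $f$ can proceed in the remainder of the section.
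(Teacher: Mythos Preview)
Your proof is correct and follows essentially the same approach as the paper: verify that $X \to P_{n+1}(f)$ is $(n+1)$-connected by construction of the Moore--Postnikov tower, invoke \Cref{lem:MP} for the $v_i$-equivalence hypothesis, and apply \Cref{thm:Lnperiodicequiv}. The paper's proof is nearly identical, just more terse.
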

\begin{proof}
The map of the lemma is $(n+1)$-connected by construction and a $v_i$-equivalence for $i \geq 0$ by \Cref{lem:MP}. Therefore \Cref{thm:Lnperiodicequiv} gives the conclusion.
\end{proof}

\begin{cor}
\label{cor:Pnffinite}
If $K$ is an $E_n$-local ring spectrum, then $K_*(P_{n+1}(f))$ is a finitely generated $K_*$-module.
\end{cor}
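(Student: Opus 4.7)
The plan is short: combine the preceding lemma, which states that $X \to P_{n+1}(f)$ is an $L_n$-equivalence of spaces, with the standard fact that the $K$-homology of a finite CW-complex is finitely generated. The two steps are (i) promote the space-level $L_n$-equivalence to a $K_*$-isomorphism for any $E_n$-local spectrum $K$, and (ii) show that $K_*(X)$ is finitely generated over $K_*$.

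For step (i), I would argue as follows. An $L_n$-equivalence of pointed spaces $\varphi\colon X \to Y$ is, by construction, the same as an isomorphism on $(E_n)_*$-homology; equivalently, the cofiber $C$ of $\Sigma^\infty \varphi_+\colon \Sigma^\infty X_+ \to \Sigma^\infty Y_+$ is $E_n$-acyclic, meaning $L_n C \simeq 0$. By the Hopkins--Ravenel smash product theorem, $L_n$ is smashing on spectra, so any $E_n$-local spectrum $K$ satisfies $K \simeq K \otimes L_n S^0$, and therefore
\[
K \otimes C \;\simeq\; K \otimes L_n S^0 \otimes C \;\simeq\; K \otimes L_n C \;\simeq\; 0.
\]
The long exact sequence associated to $\Sigma^\infty X_+ \to \Sigma^\infty P_{n+1}(f)_+ \to C$ then yields the isomorphism $K_*(X) \cong K_*(P_{n+1}(f))$.

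For step (ii), I would run a routine induction on the cells of $X$. The attaching of a $k$-cell gives a cofiber sequence $S^{k-1} \to X_{k-1} \to X_k$, and since $K_*(S^{k-1})$ is free of rank one over $K_*$, the long exact sequence in $K$-homology shows that $K_*(X_k)$ is finitely generated over $K_*$ whenever $K_*(X_{k-1})$ is. Starting from a point, we conclude $K_*(X)$ is finitely generated, and combined with step (i) this gives $K_*(P_{n+1}(f))$ finitely generated over $K_*$, as required. There is no substantial obstacle here; the corollary is essentially the concatenation of the previous lemma with the cellular finiteness argument, with the smash product theorem as the only non-elementary ingredient used to pass from $L_n$-equivalences of spaces to $K_*$-isomorphisms.
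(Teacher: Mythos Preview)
Your proof is correct and follows exactly the route the paper intends: the preceding lemma gives that $X \to P_{n+1}(f)$ is an $(E_n)_*$-equivalence, the smash product theorem upgrades this to a $K_*$-isomorphism for any $E_n$-local $K$, and finiteness of $X$ then gives finite generation of $K_*(P_{n+1}(f))$. The paper leaves the corollary unproved because these are precisely the obvious steps; your write-up is a faithful expansion of them.
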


\begin{lem}
\label{lem:fiberMPtower}
Let $n \geq 1$ and assume $P_n(f) \rightarrow Y$ is an equivalence. Then the fiber of $P_{n+1}(f) \rightarrow Y$ is a $K(\pi,n)$ for $\pi = \pi_n\hofib(f)$ a finite abelian $p$-group.
\end{lem}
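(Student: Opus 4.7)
The plan is to identify the fiber as a Postnikov truncation of $\hofib(f)$ and then verify that its only nontrivial homotopy group has the stated properties using the standing hypotheses.

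First, since $f$ is a simple map, the Moore--Postnikov tower is a tower of principal fibrations (see \cite{may_simplicial}), and in particular the fiber of $P_{k+1}(f) \to P_k(f)$ is an Eilenberg--Mac Lane space for each $k$. Comparing the long exact sequences for $F \to X \to Y$ and $F_k \to P_k(f) \to Y$ via properties~(1) and~(2) of the Moore--Postnikov tower, one sees that the natural map $F \to F_k$ exhibits $F_k$ as the Postnikov section $\tau_{\leq k-1} F$. Under the assumption that $P_n(f) \to Y$ is an equivalence, $F_n \simeq \ast$, so $F$ is $(n-1)$-connected and the fiber $F_{n+1} \simeq \tau_{\leq n} F$ of $P_{n+1}(f) \to Y$ is exactly $K(\pi_n F, n)$.

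It remains to check that $\pi := \pi_n F$ is a finite abelian $p$-group. Abelianness is automatic for $n \geq 2$ and follows from the definition of a simple map when $n = 1$. Since $X$ and $Y$ are $p$-local by the running convention of the paper, so is $F$, and hence so is $\pi$. Because $X$ and $Y$ are nilpotent finite $p$-local complexes, their homotopy groups are finitely generated $\Z_{(p)}$-modules; the fragment
\[
\pi_{n+1} Y \longrightarrow \pi_n F \longrightarrow \pi_n X
\]
of the long exact sequence of the fibration $F \to X \to Y$ then forces $\pi$ to be finitely generated. Finally, $f$ is a $v_0$-equivalence by hypothesis, i.e., a rational weak equivalence, so $F$ is rationally trivial and $\pi$ is torsion. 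A finitely generated $p$-local torsion abelian group is a finite abelian $p$-group, which yields the claim.

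No step presents a real obstacle: the identification of the fiber is standard Moore--Postnikov bookkeeping in the simple case, and each desired property of $\pi$ is an immediate consequence of one of the standing assumptions (simplicity, $p$-locality, finiteness of $X$ and $Y$, and $v_0$-equivalence).
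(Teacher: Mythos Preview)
Your proof is correct and follows essentially the same approach as the paper: both arguments compare the long exact sequences of $\hofib(f) \to X \to Y$ and $F_{n+1} \to P_{n+1}(f) \to Y$ using the defining properties of the Moore--Postnikov tower, and both deduce finiteness of $\pi$ from the $v_0$-equivalence hypothesis. Your packaging via the identification $F_k \simeq \tau_{\leq k-1}\hofib(f)$ is a bit more conceptual, and your justification of finite generation of $\pi$ is more explicit than the paper's, but the underlying argument is the same.
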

\begin{proof}
Denote the fiber of $P_{n+1}(f) \rightarrow Y$ by $F$. The exact sequence
\[ \cdots \to \pi_{n+1}F \to \pi_{n+1}P_{n+1}(f) \hookrightarrow \pi_{n+1}Y \to \pi_nF \to \pi_nP_{n+1}(f) \to \pi_nY \to \cdots \]
receives a map from the exact sequence for the fiber sequence $\hofib(f) \to X \to Y$, inducing an isomorphism $\pi_n\hofib(f) \to \pi_nF$ by the five lemma. Moreover, the exact sequence above implies that in fact $F$ is a $K(\pi,n)$. To see this, use that $\pi_i P_{n+1}(f)\rightarrow \pi_i Y$ is bijective for $i > n+1$ and injective for $i = n+1$ by construction. Also $\pi_i P_{n+1}f = \pi_i X$ for $i \leq n$ and the assumption that $P_n(f) \rightarrow Y$ is an equivalence implies that $\pi_i X \rightarrow \pi_i Y$ is surjective for $i=n$ and bijective for $i<n$. Finally, $\pi$ is indeed finite as $f$ is a rational equivalence.
\end{proof}

We will exploit the non-finiteness of $\mathbb{K}(n)_*K(\pi,n)$. The crucial fact is the following:

\begin{lem}
\label{lem:EMfiber}
Consider a fiber sequence
\begin{equation*}
K(\pi, n) \rightarrow E \rightarrow Y    
\end{equation*}
with $n \geq 1$ and $Y$ finite such that $\pi$ is a nonzero finite abelian $p$-group and $\pi_1 E$ acts trivially on $\pi$. Then $\mathbb{K}(n)_*E$ is not finitely generated over $\mathbb{K}(n)_*$. 
\end{lem}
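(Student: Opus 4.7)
The plan is to apply the Serre (Atiyah--Hirzebruch) spectral sequence for the fibration $K(\pi,n)\to E\to Y$ in $\K(n)$-homology,
$$E^2_{p,q}=H_p(Y;\K(n)_qK(\pi,n))\Longrightarrow\K(n)_{p+q}E,$$
which has constant local coefficients because $\pi_1 E$ acts trivially on $\pi$ and only finitely many nonzero columns because $Y$ is a finite CW-complex. I would first reduce the failure of finite generation of $\K(n)_*E$ over $\K(n)_*=\Z_{(p)}[v_n^{\pm 1}]$ to the existence of a nonzero $p$-divisible $\Z_{(p)}$-submodule of $\K(n)_*E$. Indeed, the cofibre sequence $\K(n)\xrightarrow{p}\K(n)\to K(n)$ together with \Cref{prop:EilenbergMacLane} and the $K(n)$-Serre spectral sequence shows that $\K(n)_qE/p$ embeds in the finite-dimensional $\F_p$-vector space $K(n)_qE$. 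Since divisible $\Z_{(p)}$-modules are injective, any divisible quotient of a $\Z_{(p)}$-module with finite mod $p$ reduction splits off as a submodule, so such a module is finitely generated iff it contains no nonzero divisible submodule. Combined with $v_n$-periodicity, this reduces the lemma to producing a single nonzero divisible submodule of $\K(n)_*E$.

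A natural candidate is the image of the fibre inclusion $i_*\colon\K(n)_*K(\pi,n)\to\K(n)_*E$, which equals the bottom edge $F_0\K(n)_*E=\bigoplus_q E^\infty_{0,q}$. For odd $q$, \Cref{prop:EilenbergMacLane} gives $E^2_{0,q}=D_{p^\infty}[-1]_q$, a divisible module that is nonzero for some such $q$ because the hypothesis $d=n$ together with $\pi\neq 0$ forces $D_{p^\infty}\neq 0$. Any homomorphic image of a divisible group is again divisible, so each $E^r_{0,q}$, and consequently $E^\infty_{0,q}$, remains divisible. Thus it suffices to prove that $E^\infty_{0,q}$ is nonzero for some odd $q$.

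The main obstacle lies in controlling the finitely many incoming differentials $d_r\colon E^r_{r,q-r+1}\to E^r_{0,q}$ for $2\le r\le \dim Y$; their sources, being subquotients of $H_r(Y;\K(n)_{q-r+1}K(\pi,n))$, may themselves contain divisible summands—arising via the universal coefficient theorem from the free part of $H_r(Y;\Z)$ tensored with $D_{p^\infty}[-1]$—so a naive dimension count does not suffice. My plan to resolve this is to compare with the $K(n)$-homology Serre spectral sequence for the same fibration through the map $\K(n)\to K(n)$: in the $K(n)$-version every page is finite dimensional over $K(n)_*$, and a multiplicative analysis exploiting the Ravenel--Wilson Hopf-ring structure of $K(n)_*K(\pi,n)$ together with the finiteness of $Y$ should exhibit a nontrivial permanent cycle on the edge in some odd degree. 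Lifting this class through the Bockstein $\beta\colon K(n)_*E\to\K(n)_{*-1}E[p]$ associated to the cofibre sequence $\K(n)\xrightarrow{p}\K(n)\to K(n)$ then produces a nonzero element in $F_0\K(n)_*E$ lying in the image of the divisible part of $i_*$, completing the proof.
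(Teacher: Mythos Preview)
Your setup through the second paragraph is fine, and the observation that a nonzero divisible $\Z_{(p)}$-submodule of $\K(n)_*E$ forces non-finite-generation is correct. (The ``iff'' you state is false---$\Z_p$ has finite mod-$p$ reduction and no divisible submodule yet is not finitely generated over $\Z_{(p)}$---but you only use the easy direction.) The real problem is the final paragraph, which is not a proof but a hope, and the hope does not survive scrutiny.

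First, in the $K(n)$-homology Serre spectral sequence the fibre column $E^2_{0,*}$ sits entirely in \emph{even} internal degree, because $K(n)_*K(\pi,n)$ is even by Ravenel--Wilson. So there are no ``permanent cycles on the edge in some odd degree'' to find there. If instead you look for a reduced even-degree class $x\in K(n)_*K(\pi,n)$ with $i_*(x)\neq 0$ in $K(n)_*E$, you still need $\beta(i_*(x))\neq 0$ in $\K(n)_{*-1}E$; but $i_*(x)$ may well lift to $\K(n)_*E$ even though $x$ does not lift to $\K(n)_*K(\pi,n)$, so naturality of the Bockstein gives you nothing. And the prior question---whether any reduced fibre class survives in $K(n)_*E$ at all---is already nontrivial: in the model case $K(\pi,n)\to *\to K(\pi,n+1)$ every reduced fibre class dies, and your invocation of ``the Ravenel--Wilson Hopf-ring structure together with the finiteness of $Y$'' is not an argument. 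The differentials hitting the edge really can be large, precisely because (as you note) their sources contain divisible pieces of the same size as the target.

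The paper avoids tracking individual edge classes altogether. The key extra structure you are missing is that the hypothesis ``$\pi_1E$ acts trivially on $\pi$'' makes the fibration \emph{principal}, so the AHSS is a spectral sequence of $\K(n)_*K(\pi,n)=\Lambda(D_{p^\infty})$-modules. One then works modulo the Serre class $\mathcal{C}$ of finite abelian $p$-groups and shows that on every page the module map $D_{p^\infty}\otimes N_0\to N_1$ (even part acting into odd part) is an isomorphism mod $\mathcal{C}$. This is a global constraint coupling all columns at once, not just the edge; it persists through both even and odd differentials because $N_0$ stays finitely generated over $\Z_{(p)}$ while $N_1$ stays $p$-torsion. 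At $E^\infty$ the unit contributes a $\Z_{(p)}$ to $N_0$, forcing $N_1$ to contain a copy of $D_{p^\infty}$ mod $\mathcal{C}$, hence to be non-finitely-generated. This sidesteps entirely the question of which specific fibre classes survive.
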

\begin{proof}
For the duration of this proof we will work in the $\mathbb{Z}/2$-graded setting, separating odd and even degrees. Recall from \Cref{prop:EilenbergMacLane} that $\mathbb{K}(n)_*K(\pi,n) = \Lambda(D_{p^{\infty}})$, where $\Lambda(D_{p^{\infty}})$ is the split square-zero extension of $\mathbb{Z}_{(p)}$ in even degree by a $p$-divisible graded $\K(n)_*$-module $D_{p^{\infty}}[-1]$ in odd degrees. Note that $D_{p^{\infty}}$ depends on $\pi$, but is nonzero.

We consider the Atiyah--Hirzebruch spectral sequence (AHSS)
\begin{equation*}
E^2_{s,t} \cong H_s(Y; \mathbb{K}(n)_t K(\pi,n)) \Rightarrow \mathbb{K}(n)_{s+t}E,
\end{equation*}
where $t\in\Z/(2p^n-2)$. Note that the $E^2$-page is given as stated because our assumptions imply that $\pi_1Y$ acts trivially on the homotopy groups of $K(\pi,n)$. 
As $\pi_1 E$ acts trivially on $\pi$, the fibration sequence of the lemma is in fact a principal fibration \cite[Lemma 4.70]{Hatcher}, i.e., can be extended to the right by a map $Y \rightarrow K(\pi,n+1)$. Thus, the AHSS becomes a spectral sequence of $\Lambda(D_{p^{\infty}})$-modules by \cite[\S15, Remark 4 on Page 352]{switzer}.\footnote{A more detailed proof will also appear in forthcoming work of Hedenlund, Krause, and Nikolaus.}

We write $M$ for the $E^2$-page viewed as a $\Z/2$-graded $\Lambda(D_{p^{\infty}})$-module, where we view $\Lambda(D_{p^{\infty}})$ as being $\Z/2$-graded by collecting the even and the odd degree elements, respectively. Moreover, we write $M_0 = E^2_{*,\text{even}}$ for the even part and $M_1 = E^2_{*,\text{odd}}$ for the odd one. 

Consider the Serre class $\mathcal{C}$ of finite abelian $p$-groups and denote by the same symbol the Serre class of $\Z/2$-graded $\Lambda(D_{p^{\infty}})$-modules whose underlying abelian groups are in $\mathcal{C}$. Then $M$ is a finitely generated free $\Lambda(D_{p^{\infty}})$-module mod $\mathcal{C}$. Indeed, this follows immediately from the observation that $M$ is a direct sum of the following kinds of groups:
\begin{itemize}
    \item[(1)] A finitely generated free $\Lambda(D_{p^{\infty}})$-module corresponding to the torsion-free summand of $H_*Y$.
    \item[(2)] A finite torsion group of the form $(H_*Y)_{\mathrm{tor}} \otimes \Lambda(D_{p^{\infty}})$.
    \item[(3)] A finite torsion group of the form $\mathrm{Tor}(H_*Y, \Lambda(D_{p^{\infty}}))$.
\end{itemize}
Also, the even part $M_0$ is finitely generated over $\mathbb{Z}_{(p)}$, whereas the odd part $M_1$ consists entirely of $p$-primary torsion. It immediately follows that the same is true of all subsequent pages of the AHSS.

For every $\mathbb{Z}/2$-graded $\Lambda(D_{p^{\infty}})$-module $N$, we have a map
\begin{equation}\label{eq:star}
\tag{$\star$}D_{p^{\infty}} \otimes N_0 \rightarrow N_1. 
\end{equation}
This map is an isomorphism mod $\mathcal{C}$ for $N = M$, simply because $M$ is free mod $\mathcal{C}$. We say more generally that a  graded $\Lambda(D_{p^{\infty}})$-module $N$ has property (P) whenever \eqref{eq:star} is an isomorphism mod $\mathcal{C}$. We now claim that every subsequent page of the AHSS has property (P). This will prove the lemma. Indeed, the AHSS collapses at a finite stage (since $H_*Y$ is bounded above), so that the $E^\infty$-page has property (P). The even part of the $E^\infty$-page contains a summand $\mathbb{Z}_{(p)}$ corresponding to the basepoint of $E$. Using property (P), it follows that the odd part of the $E^\infty$-page is not finitely generated over $\mathbb{Z}_{(p)}$, because $D_{p^{\infty}}$ is not.

To establish our claim we consider two cases.

\emph{Passing from $E^{2k}$ to $E^{2k+1}$.} Write $N = E^{2k}$ and consider it as a graded $\Lambda(D_{p^{\infty}})$-module. The differential $d_{2k}$ is of odd degree, i.e., it gives homomorphisms
\begin{equation*}
N_0 \xrightarrow{d_{2k}} N_1 \quad \text{and} \quad N_1 \xrightarrow{d_{2k}} N_0.    
\end{equation*}
Since $N_0$ is finitely generated over $\mathbb{Z}_{(p)}$ and $N_1$ is $p$-primary torsion, it follows that both these maps have image a finite abelian $p$-group (informally speaking, $d_{2k}$ is zero mod $\mathcal{C}$). It follows immediately that $E^{2k+1}$ is isomorphic to $E^{2k}$ mod $\mathcal{C}$. In particular, it still satisfies property (P).

\emph{Passing from $E^{2k-1}$ to $E^{2k}$.}
Write $N = E^{2k-1}$. This time the differential $d_{2k-1}$ is even. Since $d_{2k-1}$ is a $\Lambda(D_{p^{\infty}})$-module map, we obtain a commutative square
\[
\begin{tikzcd}
D_{p^{\infty}} \otimes N_0\ar{r} \ar{d}{d_{2k-1}} & N_1 \ar{d}{d_{2k-1}} \\
D_{p^{\infty}} \otimes N_0 \ar{r} & N_1.
\end{tikzcd}
\]
Since the horizontal maps are isomorphisms mod $\mathcal{C}$, it follows that the induced map on homology
\begin{equation*}
H(D_{p^{\infty}} \otimes N_0, d_{2k-1}) \rightarrow H(N_1, d_{2k-1})    
\end{equation*}
is an isomorphism mod $\mathcal{C}$. To conclude that $E^{2k}$ satisfies property (P), we need that the natural map
\begin{equation*}
D_{p^{\infty}} \otimes H(N_0, d_{2k-1})\rightarrow H(D_{p^{\infty}} \otimes N_0, d_{2k-1})
\end{equation*}
is an isomorphism mod $\mathcal{C}$. But this follows from the fact that the functor
\begin{equation*}
(\mathbb{Q}/\mathbb{Z}_{(p)}) \otimes - : \mathbf{Ab}^{\mathrm{fg}} \rightarrow \mathbf{Ab}
\end{equation*}
is exact mod $\mathcal{C}$ (in the target). Here $\mathbf{Ab}$ (resp.~$\mathbf{Ab}^{\mathrm{fg}}$) denotes the category of abelian groups (resp.~finitely generated abelian groups). This last statement is equivalent to the fact that $\mathrm{Tor}(\mathbb{Q}/\mathbb{Z}_{(p)}, A)$ is a finite abelian $p$-group whenever $A$ is finitely generated.
\end{proof}

We can now complete what we set out to do:
\begin{proof}[Proof of \Cref{thm:main}]
Combining \Cref{cor:Pnffinite} and \Cref{lem:EMfiber}, it follows that $\pi = 0$ with $\pi$ as in \Cref{lem:fiberMPtower}. Note that the application of \Cref{lem:EMfiber} uses the assumption on $f$ that $\pi_1 X$ acts trivially on the homotopy group $\pi_n\hofib(f)$. Then \Cref{lem:fiberMPtower} implies that $P_{n+1}(f) \rightarrow Y$ is an equivalence, which establishes the inductive step.
\end{proof}

\bibliographystyle{alpha}
\bibliography{references}
\end{document}